\documentclass[11pt]{amsart}
\usepackage{amsaddr}
\usepackage{fullpage}
\usepackage{amsmath, amssymb,url}
\usepackage{amsthm}
\usepackage{textcomp}
\usepackage{graphicx}
\usepackage{latexsym, geometry}
\usepackage{color}
\usepackage[all]{xy}

\usepackage{enumitem}
\usepackage{mathtools}
\usepackage[latin1]{inputenc}
\usepackage{ytableau}
\usepackage[enableskew,vcentermath]{youngtab}
\usepackage{tikz}

\newcommand{\symm}[1]{\mathfrak{S}_{#1}}

\numberwithin{figure}{section}

\DeclareMathOperator{\depth}{depth}
\DeclareMathOperator{\arcs}{\text Arcs}
\DeclareMathOperator{\inumber}{\mathtt i}

\DeclareMathOperator{\dindex}{\mathtt t}
\DeclareMathOperator{\cro}{\mathtt cr}
\DeclareMathOperator{\nst}{\mathtt ne}
\DeclareMathOperator{\al}{\mathtt al}
\DeclareMathOperator{\aspan}{\mathtt span}
\DeclareMathOperator{\tvd}{\mathtt tvd}

\theoremstyle{plain}
\newtheorem{theorem}{Theorem}[section]

\newtheorem{lemma}[theorem]{Lemma}
\newtheorem{corollary}[theorem]{Corollary}

\newtheorem*{theorem*}{Theorem}
\newtheorem*{definition*}{Definition}
\newtheorem*{problem*}{Problem}
\newtheorem*{conjecture*}{Conjecture}
\theoremstyle{definition}
\newtheorem{definition}[theorem]{Definition}

\newtheorem{example}[theorem]{Example}

\newtheorem{remark}[theorem]{Remark}
\newtheorem{observation}[theorem]{Observation}

\begin{document}
\title{The distribution of the intertwining number on perfect matchings}

\author{Yonah Cherniavsky*}
\author{Yuval Khachatryan-Raziel}
	
\address[A1,A2]{Department of Mathematics, Ariel University, Israel}
\email[A1,A2]{yonahch@ariel.ac.il, yuvalkh@ariel.ac.il}
	
\normalsize

\date{\today} 
\maketitle
\begin{abstract}
Ehrenborg and Readdy defined the intertwining number on set partitions. Considering this statistic on perfect matchings, we provide an additional combinatorial description and give an explicit generating function. In particular, we show that the intertwining number on perfect matchings is essentially equidistributed with the rank function of the strong Bruhat order on fixed-point-free involutions of the symmetric group.

\end{abstract}

\section{Introduction}

This paper is concerned with the intertwining number of a set partition, which is a combinatorial statistic introduced by Ehrenborg and Readdy in~\cite{EhrenborgReaddy}. This statistic is among the combinatorial parameters on set partitions whose generating function is an important $q$-analog of the Stirling numbers of the second kind: 
\begin{align}\label{A:particular}
S_q(n,k)=
\begin{cases}
q^{k-1}S_q(n-1,k-1)+[k]_qS_q(n-1,k) & \ \text{ if $n,k\geq 1$};\\
\delta_{n,k} & \ \text{ if $n=0$ or $k=0$.}
\end{cases}
\end{align}
Here $\delta_{n,k}$ is the Kronecker's delta function.

This paper studies the distribution of the intertwining number on the set of perfect matchings denoted $PM_{2n}$. A perfect matching can be viewed as a set partition all of whose blocks have exactly two elements.
The intertwining number is closely related to the so called  depth-index. The  depth-index of a set partition was introduced and studied in \cite{StirlingPosets} by Can and the first author. In \cite{GeometricInterpretationIntertwiningNumber} together with Rubey they established a connection between the intertwining number and the depth-index of set partitions: the sum these two statistics equals to ${n\choose 2}$. 
As shown in~\cite{StirlingPosets}, the depth-index is equal to the rank function of a certain graded EL-shellable order on $\Pi_n$, the set of set partitions $[n]=\{1, 2,\ldots, n\}$. That order is isomorphic to the Bruhat-Chevalley-Renner order on upper-triangular matrices, and thus, the depth-index appears to be equal to the dimension of a certain matrix variety. Thus, by the result of \cite{GeometricInterpretationIntertwiningNumber}, the intertwining number also got a geometric interpretation.  
The present paper studies the restriction of the depth index to the set of partitions with blocks of size 2, which can be interpreted as perfect matchings on the set $[2n]$. Closely related issues were studied in the recent paper~\cite{SZ}.

The set of perfect matchings is a classical combinatorial object that was studied in various contexts. Perfect matchings, when viewed as fixed-point free involutions on $[2n]$, have a natural poset structure -- namely, the restriction of the strong Bruhat order on $\symm{2n}$. This poset was studied in detail in~\cite{LexicographicShellabilityBruhatInvolutions}. 
It was shown that the length function  of Bruhat order on $PM_{2n}$ is equal to the length function of the poset that was studied by Deodhar and Srinivasan in~\cite{DeodharSrinivasanStatisticOnInvolutions}. It can be easily checked that the classical strong Bruhat order on $PM_{2n}$ and the order on $PM_{2n}$ induced from the order on set partitions studied in~\cite{StirlingPosets}, are different. 

 In this paper we prove  that generating polynomials of depth-index on $PM_{2n}$ and Bruhat order length on $PM_{2n}$ are essentially equal. More precisely, we prove the following identity:
$$\sum_{\pi\in PM_{2n} }q^{\mathrm{t}(\pi)}=q^{\binom{n+1}{2}}[2n-1]_q!!\,\,\,,$$
where $\mathrm{t}(\pi)$ denotes the depth-index of a set partition $\pi$. This identity implies the main result of the present paper:  
$$I_{PM_{2n} }(q) = \sum_{\pi\in PM_{2n} }q^{\inumber(\pi) }=q^{\binom{n}{2}}[2n-1]_q!!\,\,\,,
$$
where $\inumber(\pi)$ denotes the intertwining number of a set partition $\pi$.

\section{Background}
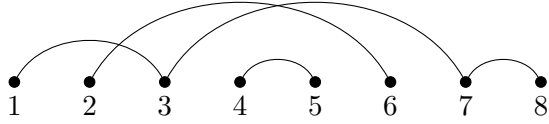
\begin{figure}
\centering
		
\begin{tikzpicture}[scale=1]
\node[below=.05cm] at (0,0) {$1$};
\node[draw,circle, inner sep=0pt, minimum width=4pt, fill=black] (0) at (0,0) {};
\node[below=.05cm] at (1,0) {$2$};
\node[draw,circle, inner sep=0pt, minimum width=4pt, fill=black] (1) at (1,0) {};
\node[below=.05cm] at (2,0) {$3$};
\node[draw,circle, inner sep=0pt, minimum width=4pt, fill=black] (2) at (2,0) {};
\node[below=.05cm] at (3,0) {$4$};
\node[draw,circle, inner sep=0pt, minimum width=4pt, fill=black] (3) at (3,0) {};
\node[below=.05cm] at (4,0) {$5$};
\node[draw,circle, inner sep=0pt, minimum width=4pt, fill=black] (4) at (4,0) {};
\node[below=.05cm] at (5,0) {$6$};
\node[draw,circle, inner sep=0pt, minimum width=4pt, fill=black] (5) at (5,0) {};
\node[below=.05cm] at (6,0) {$7$};
\node[draw,circle, inner sep=0pt, minimum width=4pt, fill=black] (6) at (6,0) {};
\node[below=.05cm] at (7,0) {$8$};
\node[draw,circle, inner sep=0pt, minimum width=4pt, fill=black] (7) at (7,0) {};
\draw[color=black] (2) to [out=120,in=60] (0);
\draw[color=black] (5) to [out=120,in=60] (1);
\draw[color=black] (6) to [out=120,in=60] (2);
\draw[color=black] (7) to [out=120,in=60] (6);
\draw[color=black] (4) to [out=120,in=60] (3);
\end{tikzpicture}
\caption{The arc diagram of the set partition $\pi = 1378|26|45$.}
\label{fig:arc_diagram_example}
\end{figure}

\begin{definition}\label{defn:set_partition}
A collection of subsets $\left\{S_i\right\}_{i\in I}$ of a set $S$ is said to be a {\em set partition} of $S$ if the sets $S_i$ are mutually disjoint and $\bigcup\limits_{i \in I} S_i = S$. The sets $S_i$ are called the blocks of the partition. For $n>0$, the set of all set partitions of $[n]$ is denoted by $\Pi_n$, where $[n]=\{1, 2,\ldots, n\}$. We will often drop set parentheses and commas and just put vertical bars between blocks. If $B_1,\dots,B_k$ are the blocks of the set partition $\pi\in\Pi_n$, then the {\em standard form} of $\pi$ is defined as $B_1|\dots|B_k$ where we assume that $\min B_i < \min B_{i+1}$ for every $1\leq i \leq k-1$, and the elements of each block are listed in increasing order. For example, $\pi = 1378|26|45$ is the standard form of a set partition in $\Pi_8$.
\end{definition}
	
\begin{definition}\label{defn:openers_closers}
Let $\pi\in\Pi_n$ be a set partition. The minimal elements of the blocks are the {\em openers of $\pi$}, and the maximal elements are the {\em closers of $\pi$}.
\end{definition}
	
\begin{example}
The partition $\pi=1378|26|45$ in $\Pi_8$ has openers $1,2,4$ and closers $5,6,8$.
\end{example}
	
\begin{definition}\label{defn:arc_diagram}
Let $\pi\in\Pi_n$ be a set partition with standard form $B_1|\dots|B_k$. The arc diagram of $\pi$ is obtained by placing the labels $1,\dots,n$ on a horizontal line and connecting consecutive elements of each block by an arc, such that the arcs don't intersect if they don't need, as in Figure \ref{fig:arc_diagram_example}.
\end{definition}
	
\begin{definition}\label{defn:extended_arc_diagram}
Let $\pi\in\Pi_n$ be a set partition of $[n]$. The extended arc diagram of $\pi$ is obtained from the arc diagram of $\pi$ by adding a half-arc $(-\infty,i)$ from the far left to each opener $i$ and a half-arc $(j, \infty)$ from each closer $j$ to the far right. These arcs are drawn in such a way that the half-arcs to the left do not cross, and half-arcs to the right do not cross either. See Figure \ref{fig:extended_arc_diagram_example} for an example.
\end{definition}

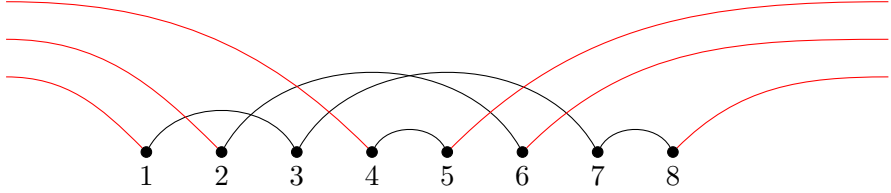
\begin{figure}[h]
\centering
		
\begin{tikzpicture}[scale=1]
\node[below=.05cm] at (0,0) {$1$};
\node[draw,circle, inner sep=0pt, minimum width=4pt, fill=black] (0) at (0,0) {};
\node[below=.05cm] at (1,0) {$2$};
\node[draw,circle, inner sep=0pt, minimum width=4pt, fill=black] (1) at (1,0) {};
\node[below=.05cm] at (2,0) {$3$};
\node[draw,circle, inner sep=0pt, minimum width=4pt, fill=black] (2) at (2,0) {};
\node[below=.05cm] at (3,0) {$4$};
\node[draw,circle, inner sep=0pt, minimum width=4pt, fill=black] (3) at (3,0) {};
\node[below=.05cm] at (4,0) {$5$};
\node[draw,circle, inner sep=0pt, minimum width=4pt, fill=black] (4) at (4,0) {};
\node[below=.05cm] at (5,0) {$6$};
\node[draw,circle, inner sep=0pt, minimum width=4pt, fill=black] (5) at (5,0) {};
\node[below=.05cm] at (6,0) {$7$};
\node[draw,circle, inner sep=0pt, minimum width=4pt, fill=black] (6) at (6,0) {};
\node[below=.05cm] at (7,0) {$8$};
\node[draw,circle, inner sep=0pt, minimum width=4pt, fill=black] (7) at (7,0) {};
\draw[color=black] (2) to [out=120,in=60] (0);
\draw[color=black] (6) to [out=120,in=60] (2);
\draw[color=black] (7) to [out=120,in=60] (6);
\draw[color=black] (5) to [out=120,in=60] (1);
\draw[color=black] (4) to [out=120,in=60] (3);
\node[] (a) at (-2,1) {};
\node[] (b) at (-2,1.5) {};
\node[] (c) at (-2,2) {};
\draw[color=red] (0) to [out=135,in=0] (a);
\draw[color=red] (1) to [out=135,in=0] (b);
\draw[color=red] (3) to [out=135,in=0] (c);
\node[] (d) at (10,1) {};
\node[] (e) at (10,1.5) {};
\node[] (f) at (10,2) {};
\draw[color=red] (7) to [out=45,in=180] (d);
\draw[color=red] (5) to [out=45,in=180] (e);
\draw[color=red] (4) to [out=45,in=180] (f);

\end{tikzpicture}
\caption{The extended arc diagram of the set partition $\pi = 1378|26|45$.}
\label{fig:extended_arc_diagram_example}
\end{figure}
	
\begin{remark}
The correspondence between set partitions of $[n]$ and their arc diagrams is a bijection, and in the sequel we will identify the partition $\pi$ of $[n]$ with its arc diagram $A(\pi)$. 
\end{remark}

The intertwining number was introduced in \cite{EhrenborgReaddy}. Using the extended diagram representations of set partitions, it can be defined equivalently as in \cite{GeometricInterpretationIntertwiningNumber}.

\begin{definition}\label{defn:intertwining_number}
Let $A\in\Pi_n$ be a set partition of $[n]$. Two (generalized) arcs $(i,j)$ and $(k,l)$ in the extended arc diagram of $A$ are said to cross in $\pi$ if $i < k < j < l$. The number of crossings in the extended arc diagram of $A$ is called the {\em intertwining number} of $A$ and denoted $\inumber(A)$. (See \cite{GeometricInterpretationIntertwiningNumber} for further details and references). 
\end{definition}
	
The depth-index, defined as follows, is closely related to the intertwining number.
	
\begin{definition}\label{definition:depth_index}
Let $A\in\Pi_n$ be a set partition of $[n]$. Denote by $\arcs(A)$ the set of arcs of $A$. For every $1\leq v \leq n$, the {\em depth} of the vertex $v$, denoted by $\depth(v)$, is the number of arcs $(i,j)\in Arcs(A)$ with $1\leq i < v < j \leq n$. For every $\alpha=(u,v) \in \arcs(A)$, the {\em depth of the arc} $\alpha$ is the number of arcs $(i,j)\in\arcs(A)$ with $1\leq i < u < v < j \leq n$.
		
The {\em depth index} $\dindex(A)$ of $A$ is  
$$
\dindex(A)=\sum_{i=1}^{|\arcs(A)|}(n-i)-\sum_{v=1}^n \depth(v) + \sum_{\alpha \in \arcs(A)} \depth(\alpha).
$$ 
\end{definition}

\begin{remark}
Intuitively, the depth of a vertex or an arc is the number of arcs above it in the arc diagram.
\end{remark}
	
Intertwining number and depth-index satisfy the following identity.
\begin{theorem}\cite[Theorem~1]{GeometricInterpretationIntertwiningNumber}\label{thm:dindex_inumber_sum}
For any set partition $A\in\Pi_n$, we have $$\dindex(A)+\inumber(A)=\binom{n}{2}.$$
\end{theorem}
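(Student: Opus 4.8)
The plan is to split off the purely numerical part of $\dindex(A)$ and then match the two remaining genuinely combinatorial sums against the crossings of the extended diagram. Let $k$ be the number of blocks of $A$, so $|\arcs(A)|=n-k$ and the leading sum in $\dindex(A)$ is $\sum_{i=1}^{n-k}(n-i)$; a one-line computation gives $\binom{n}{2}-\sum_{i=1}^{n-k}(n-i)=\binom{k}{2}$. Thus the assertion $\dindex(A)+\inumber(A)=\binom{n}{2}$ is equivalent to
$$\inumber(A)+\sum_{\alpha\in\arcs(A)}\depth(\alpha)=\binom{k}{2}+\sum_{v=1}^{n}\depth(v).$$
I would then sort the crossings counted by $\inumber(A)$ by the types of the two generalized arcs meeting: write $C$, $C_L$, $C_R$, $C_{LR}$ for the number of crossings between two arcs of $\arcs(A)$, between such an arc and a left half-arc, between such an arc and a right half-arc, and between a left and a right half-arc. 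Two left half-arcs $(-\infty,a),(-\infty,a')$ never realize the crossing pattern $-\infty<-\infty<a<a'$, and the same holds for two right half-arcs, so $\inumber(A)=C+C_L+C_R+C_{LR}$.

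The first main step is the identity $\sum_{v=1}^{n}\depth(v)=\sum_{\alpha\in\arcs(A)}\depth(\alpha)+C+C_R$. The key observation is that in the extended diagram each vertex $v$ is the left endpoint of exactly one right-going arc $\rho_v$ --- an element of $\arcs(A)$ when $v$ is not a closer, and the right half-arc $(v,\infty)$ when it is --- so $v\mapsto\rho_v$ is a bijection from $[n]$ onto the generalized arcs having a finite left endpoint. A pair $(\alpha,v)$ contributing to $\sum_v\depth(v)$, where $\alpha=(i,j)\in\arcs(A)$ and $i<v<j$, thus becomes a pair whose inner arc $\rho_v$ has finite left endpoint strictly inside $\alpha$. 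Since each point is the right endpoint of at most one arc of $\arcs(A)$, the right endpoint of $\rho_v$ cannot equal $j$, so the pair is unambiguously a nesting of $\rho_v$ inside $\alpha$ (when that endpoint is below $j$, forcing $\rho_v\in\arcs(A)$) or a crossing (when it lies above $j$). Counting the three outcomes recovers $\sum_\alpha\depth(\alpha)$, $C$, and $C_R$ respectively. Substituting this identity into the reduced form above cancels $\sum_\alpha\depth(\alpha)$, $C$, and $C_R$, and reduces everything to the single equality $C_L+C_{LR}=\binom{k}{2}$.

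The second main step proves $C_L+C_{LR}=\binom{k}{2}$ by showing that the crossings involving a left half-arc are counted by pairs of blocks. The left half-arc at an opener $a=\min B$ can only cross an arc whose finite left endpoint lies below $a$; as no element of $B$ lies below $\min B$, every such crossing joins two distinct blocks. For distinct blocks $B,B'$ let $c(B,B')$ be the number of crossings of $B$'s left half-arc with those arcs of $B'$ that have a finite left endpoint; the underlying open intervals of the latter are pairwise disjoint and cover $(\min B',\infty)$ except at the points of $B'$. Because $a=\min B\notin B'$, it lies in exactly one such interval precisely when $a>\min B'$, so $c(B,B')=1$ if $\min B>\min B'$ and $0$ otherwise. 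Each unordered pair $\{B,B'\}$ therefore contributes $c(B,B')+c(B',B)=1$, whence $C_L+C_{LR}=\binom{k}{2}$ and the theorem follows.

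I expect the delicate part to be the bookkeeping in the first step: one must confirm that the right endpoint of $\rho_v$ is never $j$, so that every incidence is cleanly either a nesting or a crossing, and that each crossing of the types $C$ and $C_R$ is recorded exactly once under $v\mapsto\rho_v$. Keeping the four crossing types separate throughout is what makes the dependence on the number of blocks --- the term $\binom{k}{2}$ coming entirely from the half-arc crossings $C_L+C_{LR}$ --- visible and the final cancellation transparent.
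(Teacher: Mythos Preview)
The paper does not give its own proof of this theorem; it is quoted verbatim from \cite{GeometricInterpretationIntertwiningNumber} and used as an input to the later arguments. So there is nothing in the present paper to compare against.

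That said, your argument is correct and self-contained. The reduction to $\inumber(A)+\sum_\alpha\depth(\alpha)=\binom{k}{2}+\sum_v\depth(v)$ via $\sum_{i=1}^{n-k}(n-i)=\binom{n}{2}-\binom{k}{2}$ is accurate. Your first main step is sound: the map $v\mapsto\rho_v$ is indeed a bijection from $[n]$ to the generalized arcs with finite left endpoint, the case $w=j$ is genuinely excluded (since $j$ would then be the right endpoint of two distinct arcs of $\arcs(A)$), and each crossing of type $C$ or $C_R$ is recovered exactly once, namely at the vertex equal to the larger of the two finite left endpoints. Your second step is also correct: the generalized arcs of a block $B'$ with finite left endpoint have open spans that partition $(\min B',\infty)\setminus B'$, so the left half-arc at $\min B$ meets exactly one of them when $\min B>\min B'$ and none otherwise, and it meets none of the arcs of its own block since all of those have left endpoint at least $\min B$. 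Summing over unordered pairs of blocks gives $C_L+C_{LR}=\binom{k}{2}$, which finishes the proof. The only cosmetic point is that the non-crossing of two left (respectively two right) half-arcs is a convention built into Definition~\ref{defn:extended_arc_diagram} rather than a consequence of the formal inequality $-\infty<-\infty$, but you use it in the right way.
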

	
We are interested in the distribution of the depth-index on perfect matchings on $[2n]$.
\begin{example}
For the set partition $\pi=1378|26|45$ as in Figure~\ref{fig:extended_arc_diagram_example}, the number of crossings in the extended arc diagram is $\inumber(\pi)=8$. 
The number of arcs in $\pi$ is $5$, the only arc of nonzero depth is $(4,5)$ with $\depth((4,5))=2$, and we have 
\begin{align*}
\depth(2)=\depth(3)=\depth(6) = 1,\\ \depth(4)=\depth(5)=2,\\ \depth(1)=\depth(7)=\depth(8)=0.
\end{align*}
Hence, by definition we have 
\begin{align*}
    \dindex(\pi) 
    &= \sum_{i=1}^8 (8-i)-\sum_{v=1}^{8}\depth(v)+ \sum_{\alpha\in\arcs(\pi)}\depth(\alpha) \\
    &= (7+6+5+4+3) - (1 + 2+ 2+1+ 1) + 2= 20.
\end{align*}
Thus, indeed $\inumber(\pi)+\dindex(\pi)=28=\binom{8}{2}$.
\end{example}

\begin{definition}
A {\em perfect matching} on $[2n]$ is a set partition of $[2n]$ of which every block has size $2$, or equivalently an arc diagram in which every vertex touches exactly one arc. 
The set of perfect matchings on $[2n]$ will be denoted by $PM_{2n}$.
\end{definition}

\begin{remark}
By interpreting the arcs in the arc diagram of a perfect matching $\pi\in PM_{2n}$ as transpositions in $\symm{2n}$, we obtain a {\em fixed point free involution} in $\symm{2n}.$
In fact, this is a  bijection from perfect matchings to fixed point free involutions in $\symm{2n}$. In the rest of the text, every perfect matching $\pi\in PM_{2n}$ will be identified with the corresponding fixed point free involution.
\end{remark}

The length (or rank) functions of the Bruhat-Chevalley order on fixed point free involutions of $\symm{2n}$ and of the order described in \cite{DeodharSrinivasanStatisticOnInvolutions} are equal (see \cite[Corollary~4]{LexicographicShellabilityBruhatInvolutions}). We denote this length function by $\ell$.

We proceed to describe this function in terms of \cite{DeodharSrinivasanStatisticOnInvolutions}.
\begin{definition}
Let $\pi \in PM_{2n}$ be a perfect matching on $[2n]$. The {\em crossing number}  $\cro(\pi)$ of $\pi$ is the number of pairs of arcs $(i,j), (k,l)\in \arcs(\pi)$ that cross.
The {\em span} of an arc $\alpha=(i,j)\in\arcs(\pi)$ 
	is $\aspan(\alpha)=j-i-1$, which is exactly the number of vertices below the arc $\alpha$.
\end{definition}
Let us recall the formula for the generating polynomial of the length function $\ell$ on $PM_{2n}$
\begin{theorem}\cite[Theorem~1.3]{DeodharSrinivasanStatisticOnInvolutions}\label{thm:length_generating_formula}
\begin{enumerate}
	\item	For every $\pi\in PM_{2n}$, 
	$$ 
	\ell(\pi)=\sum_{\alpha\in\arcs(\pi)}\aspan(\alpha)-\cro(\pi)
	$$
	\item The generating polynomial of $\ell$ on $PM_{2n}$ is 
	$$L_{PM_{2n} }(q) 
	= \sum_{\pi\in PM_{2n}}q^{\ell(\pi)} = [2n-1]_q!!,$$
\end{enumerate}

where $[2n-1]_q!!$ is the {\em $q$-analogue of the double factorial}, defined by
    $$[2n-1]_q!!=\prod_{i=1}^n\frac{1-q^{2i-1}}{1-q}.$$
\end{theorem}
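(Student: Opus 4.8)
The plan is to treat statement~(1) as the structural input and deduce~(2) from it by a short induction. Since~(1) is the cited result~\cite[Theorem~1.3]{DeodharSrinivasanStatisticOnInvolutions} one may simply invoke it, but if a self-contained argument is wanted it would go as follows. The Bruhat order restricted to fixed point free involutions in $\symm{2n}$ is graded~\cite{LexicographicShellabilityBruhatInvolutions}, so it suffices to verify that the statistic $r(\pi):=\sum_{\alpha\in\arcs(\pi)}\aspan(\alpha)-\cnumber(\pi)$ is $0$ at the unique minimum $(1\,2)(3\,4)\cdots(2n-1\,2n)$ --- where all spans and the crossing number vanish --- and increases by exactly $1$ along every covering relation; for the latter I would use the classification of Bruhat covers of fixed point free involutions~\cite{LexicographicShellabilityBruhatInvolutions}, which amounts to a short list of local reconfigurations of a pair of arcs, and check the unit jump of $r$ case by case. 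As a cross-check: partitioning $\inv(\pi)$ into the $n$ inversions internal to single arcs and those between distinct arcs, a pair of arcs contributes $0$, $2$, or $4$ inversions according as the arcs are disjoint, crossing, or nested, while contributing $0$, $1$, or $2$ to $r$; hence $\inv(\pi)=n+2\,r(\pi)$, consistent with the Bruhat rank being $\tfrac12(\inv(\pi)-n)$.

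Granting~(1), I would prove~(2) by induction on $n$ via the recursion $L_{PM_{2n}}(q)=[2n-1]_q\,L_{PM_{2n-2}}(q)$, with base case $L_{PM_2}(q)=q^{0}=[1]_q$. Fix the classical bijection $PM_{2n}\longleftrightarrow PM_{2n-2}\times\{1,\dots,2n-1\}$: given $\pi\in PM_{2n}$, let $c$ be the partner of $2n$, delete the vertices $c$ and $2n$, and relabel the remaining $2n-2$ vertices in an order-preserving way to obtain $\pi'\in PM_{2n-2}$, recording the position $c$. The crux is the identity $\ell(\pi)=\ell(\pi')+(2n-1-c)$, read off from formula~(1): the removed arc $(c,2n)$ has span $2n-1-c$; removing the vertex $2n$ alters no span; and removing the vertex $c$ decreases by $1$ the span of exactly those arcs lying over $c$, whose number equals the number of arcs crossing $(c,2n)$, so these two changes cancel inside $\sum\aspan-\cnumber$. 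Summing $q^{\ell(\pi)}$ over the bijection and letting $c$ range over $\{1,\dots,2n-1\}$ gives $L_{PM_{2n}}(q)=\bigl(\sum_{c=1}^{2n-1}q^{2n-1-c}\bigr)L_{PM_{2n-2}}(q)=(1+q+\cdots+q^{2n-2})\,L_{PM_{2n-2}}(q)$, which is the recursion; iterating yields $L_{PM_{2n}}(q)=\prod_{i=1}^{n}[2i-1]_q=[2n-1]_q!!$.

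Part~(2) is then essentially bookkeeping, so the difficulty is concentrated in part~(1): correctly pinning down the rank function of the restricted Bruhat order. The cover-by-cover check (equivalently, the inversion count by pairs of arcs) is where off-by-one and sign errors are most likely, and one must make sure the list of Bruhat covers among fixed point free involutions is complete. I would therefore settle~(1) first --- ideally by citation --- and then present~(2) as the two-line induction above.
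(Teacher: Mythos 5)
The paper itself offers no proof of this statement: it is imported as background, part (1) resting on \cite{DeodharSrinivasanStatisticOnInvolutions} together with the identification of length functions in \cite[Corollary~4]{LexicographicShellabilityBruhatInvolutions}, and part (2) being Theorem~1.3(2) of Deodhar--Srinivasan. So the comparison is between your argument and the cited sources rather than anything in the paper. Your deduction of (2) from (1) is correct and complete: the bijection $\pi\mapsto(\pi',c)$ obtained by deleting the arc $(c,2n)$ does satisfy $\ell(\pi)=\ell(\pi')+(2n-1-c)$, because the arcs strictly over $c$ are exactly the arcs crossing $(c,2n)$ (any arc $(i,j)$ with $i<c<j$ automatically has $j<2n$), so the loss in $\sum\aspan$ caused by deleting the vertex $c$ cancels the loss in $\cnumber$, leaving only the span $2n-1-c$ of the deleted arc; summing over $c\in\{1,\dots,2n-1\}$ produces the factor $[2n-1]_q$ and the induction closes with $L_{PM_2}(q)=1$. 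For part (1) you correctly isolate the real content --- that $r=\sum\aspan-\cnumber$ is the rank function of the Bruhat order restricted to fixed-point-free involutions --- but you only sketch it (gradedness plus a case-by-case check over covering relations), ultimately deferring to citation, which is the honest choice here. Your cross-check $\inv(\pi)=n+2r(\pi)$ is correct: a pair of arcs contributes $0$, $1$, or $2$ to $r=\cro+2\nst$ and $0$, $2$, or $4$ to $\inv$ according as it is aligned, crossing, or nested. Note, though, that this identity alone does not establish that the restricted Bruhat order is graded with rank $\tfrac12(\inv(\pi)-n)$; that gradedness is precisely what \cite{LexicographicShellabilityBruhatInvolutions} supplies, so the case analysis you postpone (or the citation replacing it) is genuinely needed and is the only unfinished piece of your write-up.
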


\section{Main result}
Our main result is Theorem~\ref{mainitnnn} below which claims that the generating polynomial of the intertwining number $I_{PM_{2n}}(q)$ is
$$I_{PM_{2n} }(q) = \sum_{\pi\in PM_{2n} }q^{\inumber(\pi) }=q^{\binom{n}{2}}[2n-1]_q!!\,\,.
$$
First, we prove the following relation between the distributions of the depth-index and the length function of Bruhat-Chevalley order on fixed point free involutions, and then we establish the connection between the distributions of the intertwining number and the length function of Bruhat-Chevalley order on fixed point free involutions which is our main result.
\begin{theorem}\label{thm:main}
	The depth index on $PM_{2n}$ is distributed as follows:
	$$T_{PM_{2n}}(q)=\sum_{\pi\in PM_{2n} }q^{\dindex(\pi) }  =q^{\binom{n+1}{2} }[2n-1]_q!!.
	$$
\end{theorem}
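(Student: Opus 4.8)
The plan is to give a closed formula for $\dindex$ on perfect matchings, identify it up to an additive constant with the ``nesting analogue'' of the Deodhar--Srinivasan length $\ell$, and then combine the classical symmetry between crossings and nestings of matchings with the palindromicity of $[2n-1]_q!!$. First I would evaluate the three summands defining $\dindex(\pi)$ for $\pi\in PM_{2n}$, where the ground set is $[2n]$ and $\pi$ has exactly $n$ arcs: one gets $\sum_{i=1}^{n}(2n-i)=\tfrac{n(3n-1)}{2}$; next $\sum_{v=1}^{2n}\depth(v)=\sum_{(i,j)\in\arcs(\pi)}\#\{v:i<v<j\}=\sum_{\alpha\in\arcs(\pi)}\aspan(\alpha)$ by switching the order of summation; and $\sum_{\alpha\in\arcs(\pi)}\depth(\alpha)=\nst(\pi)$, the number of nested pairs of arcs, since $\depth(\alpha)$ counts the arcs strictly containing $\alpha$. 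Hence, for every $\pi\in PM_{2n}$,
$$\dindex(\pi)=\frac{n(3n-1)}{2}-\sum_{\alpha\in\arcs(\pi)}\aspan(\alpha)+\nst(\pi),$$
which plays the role of the additional combinatorial description mentioned in the abstract (and is consistent with $\dindex(\pi)=\binom{2n}{2}-\inumber(\pi)$ from Theorem~\ref{thm:dindex_inumber_sum}).

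Next I would use Theorem~\ref{thm:length_generating_formula}(1), $\ell(\pi)=\sum_{\alpha}\aspan(\alpha)-\cnumber(\pi)$, and introduce the statistic obtained by replacing crossings with nestings, $\ell'(\pi):=\sum_{\alpha\in\arcs(\pi)}\aspan(\alpha)-\nst(\pi)$. The formula above then reads $\dindex(\pi)=\tfrac{n(3n-1)}{2}-\ell'(\pi)$, so that $T_{PM_{2n}}(q)=q^{n(3n-1)/2}\sum_{\pi\in PM_{2n}}q^{-\ell'(\pi)}$, and it remains to prove that $\ell'$ and $\ell$ are equidistributed on $PM_{2n}$.

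To prove that equidistribution I would group the matchings by their set of openers, equivalently by the Dyck word of length $2n$ recording, point by point, whether each point of the involution is an opener or a closer. On one such class the multisets of openers and of closers are fixed, hence $\sum_{\alpha}\aspan(\alpha)=\big(\sum_{\text{closers}}v\big)-\big(\sum_{\text{openers}}u\big)-n=:c$ is constant, and there $\ell=c-\cnumber$ while $\ell'=c-\nst$. Within such a class crossings and nestings are equidistributed: constructing $\pi$ by scanning the word from left to right, each down-step occurring at height $h$ is a free choice of which of the $h$ currently open arcs to close, and closing the $j$-th of them (ordered from the one opened earliest to the one opened latest) creates exactly $h-j$ new crossings and $j-1$ new nestings, so the joint generating polynomial over the class equals $\prod_{\text{down-steps at height }h}\big(p^{h-1}+p^{h-2}q+\cdots+q^{h-1}\big)$, which is symmetric in $p$ and $q$ (this is the classical crossing--nesting symmetry for perfect matchings; cf.\ de~M\'edicis--Viennot, Kasraoui--Zeng, Chen--Deng--Du--Stanley--Yan). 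Setting one variable to $1$ shows $\cnumber$ and $\nst$ have the same distribution on the class; since $c$ is constant there, $q^{-\ell}$ and $q^{-\ell'}$ have the same sum over the class, and summing over all classes gives $\sum_{\pi\in PM_{2n}}q^{-\ell'(\pi)}=\sum_{\pi\in PM_{2n}}q^{-\ell(\pi)}$.

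Finally I would invoke Theorem~\ref{thm:length_generating_formula}(2): $\sum_{\pi}q^{-\ell(\pi)}=[2n-1]_{q^{-1}}!!$. Since $[2n-1]_q!!=\prod_{i=1}^{n}(1+q+\cdots+q^{2i-2})$ is a product of palindromic polynomials, it is itself palindromic of degree $\sum_{i=1}^{n}(2i-2)=n(n-1)$, so $[2n-1]_{q^{-1}}!!=q^{-n(n-1)}[2n-1]_q!!$; therefore
$$T_{PM_{2n}}(q)=q^{n(3n-1)/2}\,[2n-1]_{q^{-1}}!!=q^{\,n(3n-1)/2-n(n-1)}\,[2n-1]_q!!=q^{\binom{n+1}{2}}[2n-1]_q!!,$$
because $\tfrac{n(3n-1)}{2}-n(n-1)=\tfrac{n(n+1)}{2}=\binom{n+1}{2}$. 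The main obstacle is the equidistribution step: one must be careful that the crossing--nesting bijection can be chosen to preserve the opener/closer pattern (so that $\sum_{\alpha}\aspan(\alpha)$ is unchanged), which forces the argument to be organized class by class rather than through a single global bijection; everything else is bookkeeping together with the quoted results.
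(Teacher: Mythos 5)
Your proposal is correct, and it reaches the same intermediate identity as the paper ($\dindex(\pi)=n^2+\binom{n}{2}-\sum_\alpha\aspan(\alpha)+\nst(\pi)$, since $\tfrac{n(3n-1)}{2}=n^2+\binom{n}{2}$), but it handles the key equidistribution step by a genuinely different route. The paper first computes $\inumber(\pi)=3\cro(\pi)+2\nst(\pi)+\al(\pi)$ and combines this with $\dindex+\inumber=\binom{2n}{2}$ to derive the closed formula $\tvd(\pi)=2\cro(\pi)+2\nst(\pi)$, which turns $\ell$ and $\dindex$ into explicit linear combinations of $\cro$ and $\nst$ alone; it then invokes the Kasraoui--Zeng involution as a black box to swap $\cro$ and $\nst$, and finishes with the palindromicity of $[2n-1]_q!!$ exactly as you do. You instead never need a formula for $\tvd$ in terms of $\cro$ and $\nst$: by stratifying $PM_{2n}$ according to the opener/closer word you make $\sum_\alpha\aspan(\alpha)$ constant on each stratum and prove the symmetric joint distribution of $(\cro,\nst)$ there directly, via the product $\prod_h\bigl(p^{h-1}+p^{h-2}q+\cdots+q^{h-1}\bigr)$ over down-steps; your accounting of $h-j$ new crossings and $j-1$ new nestings at each closer is right, and every crossing/nesting pair is attributed to exactly one closer, so the product formula is valid. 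What your approach buys is self-containedness (you reprove the special case of Kasraoui--Zeng that is actually needed, and you bypass the intertwining number and Theorem~\ref{thm:dindex_inumber_sum} entirely); what the paper's approach buys is the extra structural identities $\tvd=2(\cro+\nst)=2\binom{n}{2}-2\al$ and $\ell=\cro+2\nst$, $\dindex=n^2+\binom{n}{2}-2\cro-\nst$, which are of independent interest. Both arguments then conclude identically from $\deg[2n-1]_q!!=n^2-n$ and palindromicity, and your arithmetic $\tfrac{n(3n-1)}{2}-n(n-1)=\binom{n+1}{2}$ checks out.
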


\begin{corollary}
The depth index $\dindex$ and $\binom{n+1}{2}+\ell$ are equidistributed on $PM_{2n}$.
\end{corollary}

The following definitions are valid for arcs in the regular arc diagrams and not the extended ones.  
\begin{definition}\cite{CrossingNestingDistribution}
	Let $\pi\in PM_{2n}$. We say that two arcs $e=(i,j),f=(k,l)\in\arcs(\pi)$ form:
	\begin{itemize}
		\item[(i)] a {\em crossing} with $e$ as {\em initial edge} if $i < k < j < l$;
		\item[(ii)] a {\em nesting} with $e$ as {\em initial edge} if $i < k < l < j$;
		\item[(iii)] a {\em alignment} with $e$ as {\em initial edge} if $i < j < k < l$.
	\end{itemize}
The number of crossings, nestings and alignments will be denoted by $\cro,\nst$ and $\al$ respectively. 
\end{definition}

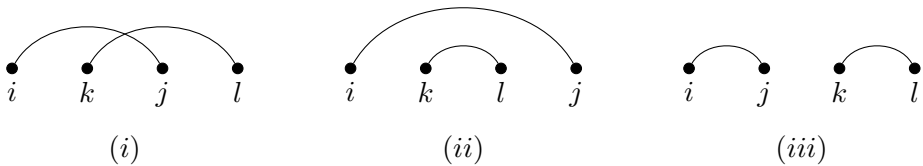
\begin{figure}[h]
  \centering
\begin{tikzpicture}[scale=1]
\node[below=.05cm] at (0,0) {$i$};
\node[draw,circle, inner sep=0pt, minimum width=4pt, fill=black] (0) at (0,0) {};
\node[below=.05cm] at (1,0) {$k$};
\node[draw,circle, inner sep=0pt, minimum width=4pt, fill=black] (1) at (1,0) {};
\node[below=.05cm] at (2,0) {$j$};
\node[draw,circle, inner sep=0pt, minimum width=4pt, fill=black] (2) at (2,0) {};
\node[below=.05cm] at (3,0) {$l$};
\node[draw,circle, inner sep=0pt, minimum width=4pt, fill=black] (3) at (3,0) {};
\node[below=.05cm] at (4.5,0) {$i$};
\node[draw,circle, inner sep=0pt, minimum width=4pt, fill=black] (4) at (4.5,0) {};
\node[below=.05cm] at (5.5,0) {$k$};
\node[draw,circle, inner sep=0pt, minimum width=4pt, fill=black] (5) at (5.5,0) {};
\node[below=.05cm] at (6.5,0) {$l$};
\node[draw,circle, inner sep=0pt, minimum width=4pt, fill=black] (6) at (6.5,0) {};
\node[below=.05cm] at (7.5,0) {$j$};
\node[draw,circle, inner sep=0pt, minimum width=4pt, fill=black] (7) at (7.5,0) {};
\node[below=.05cm] at (9,0) {$i$};
\node[draw,circle, inner sep=0pt, minimum width=4pt, fill=black] (8) at (9,0) {};
\node[below=.05cm] at (10,0) {$j$};
\node[draw,circle, inner sep=0pt, minimum width=4pt, fill=black] (9) at (10,0) {};
\node[below=.05cm] at (11,0) {$k$};
\node[draw,circle, inner sep=0pt, minimum width=4pt, fill=black] (10) at (11,0) {};
\node[below=.05cm] at (12,0) {$l$};
\node[draw,circle, inner sep=0pt, minimum width=4pt, fill=black] (11) at (12,0) {};
\draw[color=black] (2) to [out=120,in=60] (0);
\draw[color=black] (3) to [out=120,in=60] (1);
\draw[color=black] (7) to [out=120,in=60] (4);
\draw[color=black] (6) to [out=120,in=60] (5);
\draw[color=black] (9) to [out=120,in=60] (8);
\draw[color=black] (11) to [out=120,in=60] (10);

\node[below=0.75cm] at (1.5,0) {$(i)$};
\node[below=0.75cm] at (6,0) {$(ii)$};
\node[below=0.75cm] at (10.5,0) {${(iii)}$};

\end{tikzpicture}
\caption{Crossing, nesting and alignment of two edges}
  \label{fig:crossing_nestings_alignements}
\end{figure}

\begin{lemma}\label{obs:nst_cr_al_sum}
	For every perfect matching $\pi$ in $PM_{2n}$  we have $$\cro(\pi)+\nst(\pi)+\al(\pi) = \binom{n}{2}\,\,.$$
\end{lemma}
\begin{proof}
For every perfect matching $\pi$ in $PM_{2n}$ there are exactly $\binom{n}{2}$ pairs $(e,f) \in \arcs(\pi) \times \arcs(\pi)$ with $e=(i,j)$, $f=(k,l)$ and $i < k$. For every such pair $(e,f)$ exactly one of the three following possibilities holds: $(e,f)$ forms a crossing' or a nesting' or an alignment. Thus, the sum of the numbers of all crossings, nestings and alignments of a given perfect matching $\pi\in PM_{2n}$ equals to the number of all pairs $(e,f) \in \arcs(\pi) \times \arcs(\pi)$ with $e=(i,j)$, $f=(k,l)$ and $i < k$, i.e., it equals to $\binom{n}{2}$. 
\end{proof}

By changing the order of summation, we obtain the following identities:
\begin{lemma}\label{obs:depth_identity} For every $\pi\in PM_{2n}$, we have:
	\begin{enumerate}
	\item $\sum_{v=1}^{2n} \depth(v) = \sum_{\alpha \in \arcs(\pi)}\aspan(\alpha)$;
	\item $\nst(\pi) = \sum_{\alpha\in\arcs(\pi)} \depth(\alpha)$.
	\end{enumerate}
\end{lemma}
\begin{proof}
Recall that $\aspan(\alpha)$ is the number of vertices below the arc $\alpha$. Thus, each vertex contributes the number of arcs which are above it to the r.h.s. of the first identity. Recall that by definition the depth of a vertex is the number of arcs above it. Thus, the sum $\sum_{\alpha \in \arcs(\pi)}\aspan(\alpha)$ counts the sum of depths of all the vertices which is exactly the l.h.s. of the first identity.

Recall that by definition the depth of an arc is the number of arcs above it. Thus, each pair of arcs which forms a nesting contributes exactly one to the sum $\sum_{\alpha\in\arcs(\pi)} \depth(\alpha)$. The number of all pairs of arcs which form nestings is, by definition, $\nst(\pi)$. This proves the second identity.  
\end{proof}

\begin{definition}\label{definition:tvd}
	Let $\pi \in PM_{2n}$. The sum $\sum_{v=1}^{2n}\depth(v)=\sum_{\alpha\in\arcs(\pi)}\aspan(\alpha)$ is called the {\em total vertex depth} of $\pi$ and denoted by $\tvd(\pi)$.
\end{definition}

By combining Lemma~\ref{obs:nst_cr_al_sum} and Lemma~\ref{obs:depth_identity} we obtain the following expressions for the depth index.
\begin{lemma}\label{obs:depth-index-restatement}
	For every $\pi\in PM_{2n}$ we have the following equality:
	\begin{align*}
	\dindex(\pi) 
		&= \sum_{i=1}^{|\arcs(\pi)|}(2n-i) - \tvd(\pi) + \nst(\pi) \\
		&= n^2 + 2\binom{n}{2} - \tvd(\pi) - \cro(\pi) - \al(\pi).		  
	\end{align*}
\end{lemma}
\begin{proof} Since $|\arcs(\pi)|=n$, $\tvd(\pi)=\sum_{v=1}^{2n} \depth(v)$, $\sum_{\alpha \in \arcs(\pi)} \depth(\alpha)=\nst(\pi)$ and $\nst(\pi)=\binom{n}{2}-\cro(\pi)-\al(\pi)$, we have
\begin{align*}  
\dindex(\pi)&=\sum_{i=1}^{|\arcs(\pi)|}(2n-i)-\sum_{v=1}^{2n} \depth(v) + \sum_{\alpha \in \arcs(\pi)} \depth(\alpha) \\
&=\sum_{i=1}^{n}(2n-i)-\sum_{v=1}^{2n} \depth(v) + \sum_{\alpha \in \arcs(\pi)} \depth(\alpha) \\
&=\binom{2n}{2}-\binom{n}{2}-\tvd(\pi)+\nst(\pi) \\
&=\binom{2n}{2}-\binom{n}{2}-\tvd(\pi)+\binom{n}{2}-\cro(\pi)-\al(\pi) \\
&=\binom{2n}{2}-\tvd(\pi)-\cro(\pi)-\al(\pi) \\
&=n^2+n^2-n-\tvd(\pi)-\cro(\pi)-\al(\pi) \\
&= n^2 + 2\binom{n}{2} - \tvd(\pi) - \cro(\pi) - \al(\pi).
\end{align*}
\end{proof}

Recall the definition of the intertwining number (Definition \ref{defn:intertwining_number}): the number of crossings in the extended arc diagram of $A$ is called the {\em intertwining number} of $A$ and is denoted $\inumber(A)$.
\begin{lemma}\label{lemma:intertwining_number_restatement}
Let $\pi\in PM_{2n}$. Then $\inumber(\pi) = 3\cro(\pi) + 2\nst(\pi) + \al(\pi)$.
\end{lemma}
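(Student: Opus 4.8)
The plan is to compute $\inumber(\pi)$ by classifying the crossings in the extended arc diagram according to how many of the two crossing strands are genuine arcs (as opposed to half-arcs). Recall that the extended diagram of $\pi \in PM_{2n}$ adds a left half-arc $(-\infty, i)$ for each opener $i$ and a right half-arc $(j, \infty)$ for each closer $j$; since $\pi$ is a perfect matching, every arc $(i,j)$ contributes exactly one opener and one closer, so there are $n$ left half-arcs and $n$ right half-arcs. A crossing in the extended diagram is a pair of strands $(a, b)$, $(c, d)$ with $a < c < b < d$ (reading endpoints on the line $\RR \cup \{\pm\infty\}$). I would split the count into three cases: (1) both strands are honest arcs of $\pi$; (2) exactly one strand is an honest arc and the other is a half-arc; (3) both strands are half-arcs.

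Case (1) contributes exactly $\cro(\pi)$ by Definition~\ref{defn:intertwining_number} restricted to honest arcs. For Case (3), two left half-arcs never cross and two right half-arcs never cross (this is how the extended diagram is drawn), so the only possibility is a left half-arc $(-\infty, i)$ crossing a right half-arc $(j, \infty)$; the crossing condition $-\infty < j < i < \infty$ forces $j < i$, i.e. a closer $j$ lying to the left of an opener $i$. I would then observe that this is exactly an alignment: if $j$ is the closer of arc $e=(i',j)$ and $i$ the opener of arc $f=(i,l)$, then $i' < j < i < l$, which is precisely alignment type (iii) with $e$ as initial edge; conversely every alignment gives such a pair. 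So Case (3) contributes $\al(\pi)$. Case (2) is the heart of the argument: for a fixed honest arc $e=(i,j)$ I would count how many half-arc crossings it participates in. A left half-arc $(-\infty, k)$ crosses $(i,j)$ iff $i < k < j$, i.e. iff $k$ is an opener strictly inside the span of $e$; a right half-arc $(k, \infty)$ crosses $(i,j)$ iff $i < k < j$, i.e. iff $k$ is a closer strictly inside the span of $e$. Since every vertex strictly between $i$ and $j$ is the endpoint of exactly one arc, and that arc is either nested under $e$ (contributing both its endpoints, hence two half-arc crossings: one opener-type, one closer-type), or crosses $e$ (contributing exactly one endpoint strictly inside $(i,j)$, hence one half-arc crossing), I get that arc $e$ participates in $2\,\nst_e(\pi) + \cro_e(\pi)$ Case-(2) crossings, where $\nst_e, \cro_e$ count nestings/crossings involving $e$ (counted with the standard normalization). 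Summing over all honest arcs $e$ and being careful that a crossing pair $(e,f)$ of honest arcs is counted once in my $\cro$ bookkeeping, I obtain that Case (2) contributes $2\,\nst(\pi) + 2\,\cro(\pi)$: indeed each nesting pair $\{e,f\}$ with $e$ outer contributes $2$ (the two endpoints of the inner arc sit under $e$), and each crossing pair contributes $1$ from each of the two arcs, totalling $2$ per crossing pair.

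Adding the three cases gives $\inumber(\pi) = \cro(\pi) + \bigl(2\,\nst(\pi) + 2\,\cro(\pi)\bigr) + \al(\pi) = 3\,\cro(\pi) + 2\,\nst(\pi) + \al(\pi)$, as claimed. An alternative, essentially bookkeeping-free route that I would mention as a sanity check: combine Theorem~\ref{thm:dindex_inumber_sum} (namely $\dindex(\pi) + \inumber(\pi) = \binom{2n}{2}$) with Observation~\ref{obs:depth-index-restatement} and Observation~\ref{obs:nst_cr_al_sum}; this expresses $\inumber(\pi)$ in terms of $\tvd(\pi), \cro(\pi), \al(\pi)$, and matching it against $3\,\cro + 2\,\nst + \al$ reduces the lemma to the identity $\tvd(\pi) = \binom{2n}{2} - n^2 - \cro(\pi) + \nst(\pi) - \al(\pi)$ — but verifying that still requires the span-counting above, so I view the direct case analysis as cleaner. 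The main obstacle is purely the careful normalization in Case (2): making sure each honest-arc crossing pair and each nesting pair is weighted correctly (once vs. twice) when summing the per-arc contributions, so that no crossing of the extended diagram is double-counted or missed. Once that bookkeeping is pinned down the lemma follows immediately.
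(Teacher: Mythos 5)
Your proof is correct and is essentially the paper's argument in a different organization: the paper attributes the crossings of the extended diagram directly to pairs of honest arcs and checks that a nesting contributes $2$, a crossing $3$, and an alignment $1$, which is exactly the tally your three-case classification by strand type produces. Your version is somewhat more careful about verifying that every extended-diagram crossing is counted exactly once, but the underlying bookkeeping is the same.
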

\begin{proof}
	Consider two arcs of $\pi$: $e=(i,j)$ and $f=(k,l)$ with $i < k$.
	If the pair $(e,f)$ is a nesting,then it gives exactly two crossings in the extended arc diagram: the half-arc $(-\infty,k)$ crosses the arc $e=(i, j)$, and the arc $e$ crosses the half-arc $(l,\infty)$.
	If the pair $(e,f)$ is a crossing, it contributes exactly three crossings to the extended arc diagram: the arc $e$ crosses the arc $f$, the half-arc $(-\infty,k)$ crosses the arc $e$, and the arc $f$ crosses the half-arc $(j,\infty)$. Finally, if the pair $(e,f)$ is an alignment, we have a single crossing between the half-arcs $(j,\infty)$ and $(-\infty, k)$.
	By summing everything, we get the desired result.
\end{proof}

By using Theorem \ref{thm:dindex_inumber_sum} one can express $\tvd(\pi)$ in terms of $\nst(\pi), \al(\pi)$ and $\cro(\pi)$.

\begin{lemma}
	For every $\pi\in PM_{2n}$ we have:
	$$\tvd(\pi) = 2\binom{n}{2} - 2\al(\pi) = 2(\cro(\pi) + \nst(\pi)).$$
\end{lemma}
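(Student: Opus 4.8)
The plan is to treat this as an exercise in solving for $\tvd(\pi)$ from the linear relations already collected. Three of them suffice: Theorem~\ref{thm:dindex_inumber_sum} applied to $\pi$ regarded as a set partition of $[2n]$, which reads $\dindex(\pi)+\inumber(\pi)=\binom{2n}{2}$; the restatement of the depth index in Observation~\ref{obs:depth-index-restatement}, namely $\dindex(\pi)=n^2+2\binom{n}{2}-\tvd(\pi)-\cro(\pi)-\al(\pi)$; and the restatement of the intertwining number in Lemma~\ref{lemma:intertwining_number_restatement}, namely $\inumber(\pi)=3\cro(\pi)+2\nst(\pi)+\al(\pi)$. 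Adding the latter two gives
$$\dindex(\pi)+\inumber(\pi)=n^2+2\binom{n}{2}-\tvd(\pi)+2\cro(\pi)+2\nst(\pi),$$
since the $\al(\pi)$ contributions cancel and the $\cro(\pi)$ contributions combine.

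Next I would substitute the value from Theorem~\ref{thm:dindex_inumber_sum} and simplify using the elementary identities $2\binom{n}{2}=n^2-n$ and $\binom{2n}{2}=2n^2-n$. Writing the previous display as $2n^2-n-\tvd(\pi)+2\cro(\pi)+2\nst(\pi)=2n^2-n$, all the terms polynomial in $n$ cancel and we are left with $\tvd(\pi)=2\cro(\pi)+2\nst(\pi)=2(\cro(\pi)+\nst(\pi))$, which is the second equality of the statement. The first equality then follows immediately from Observation~\ref{obs:nst_cr_al_sum}, which gives $\cro(\pi)+\nst(\pi)=\binom{n}{2}-\al(\pi)$, hence $\tvd(\pi)=2\binom{n}{2}-2\al(\pi)$.

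There is no genuine obstacle here; the only point requiring care is keeping the two ``sizes'' apart, since $\pi$ has $n$ arcs but $2n$ vertices, so Theorem~\ref{thm:dindex_inumber_sum} contributes $\binom{2n}{2}$ rather than $\binom{n}{2}$, and it is precisely the cancellation of $n^2+2\binom{n}{2}$ against $\binom{2n}{2}$ that makes the identity collapse. As a consistency check one could instead argue the equality $\tvd(\pi)=2\nst(\pi)+2\cro(\pi)$ directly: fix an arc $\alpha=(i,j)$; each of the $\aspan(\alpha)=j-i-1$ vertices strictly below $\alpha$ is an endpoint of a unique other arc $f$, and $f$ accounts for two such vertices when it nests inside $\alpha$ and one when it crosses $\alpha$, so $\aspan(\alpha)=2\cdot\#\{f\text{ nested in }\alpha\}+\#\{f\text{ crossing }\alpha\}$. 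Summing over $\alpha$ counts each nesting once (from its outer arc) and each crossing twice (once from each of its arcs), giving $\tvd(\pi)=2\nst(\pi)+2\cro(\pi)$ again.
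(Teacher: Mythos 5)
Your proof is correct and follows essentially the same route as the paper: it combines Theorem~\ref{thm:dindex_inumber_sum} with Observation~\ref{obs:depth-index-restatement} and Lemma~\ref{lemma:intertwining_number_restatement}, cancels the $n^2+2\binom{n}{2}$ against $\binom{2n}{2}$, and then applies Observation~\ref{obs:nst_cr_al_sum}. The direct counting argument you append (each arc's span decomposes as twice the number of arcs nested under it plus the number of arcs crossing it) is a valid, more elementary verification not present in the paper, but your main line of reasoning is the paper's own.
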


\begin{proof}
	By Theorem~\ref{thm:dindex_inumber_sum}, for $\pi\in PM_{2n}$,
	and using Lemma~\ref{obs:depth-index-restatement} and Lemma~\ref{lemma:intertwining_number_restatement} we obtain
	\begin{align*}
		\binom{2n}{2} 
		&= \dindex(\pi)+\inumber(\pi) \\
		&= n^2 + 2\binom{n}{2} -\tvd(\pi) - \cro(\pi) - \al(\pi) + 3\cro(\pi) + 2\nst(\pi) + \al(\pi) \\
		&= n^2 + 2\binom{n}{2} - \tvd(\pi) + 2\cro(\pi) + 2\nst(\pi).
	\end{align*}
	By rearranging the equation we obtain:
	\begin{align*}
	    \tvd(\pi) 
	    &= n^2 + 2\binom{n}{2 }- \binom{2n}{2} + 2\cro(\pi) + 2\nst(\pi) \\
	    &= 2\cro(\pi) + 2\nst(\pi) \\
	    &= 2\binom{n}{2} - 2\al(\pi),
	\end{align*}
	as desired.
\end{proof}

We can now express $\ell(\pi)$ and $\dindex(\pi)$ using $\nst(\pi)$ and $\cro(\pi)$.
\begin{corollary}\label{cor:statistic_formulas}
For every $\pi\in PM_{2n}$ we have:
$$\ell(\pi)=\cro(\pi)+2\nst(\pi)$$
and 
$$\dindex(\pi)=n^2 + \binom{n}{2}-2\cro(\pi)-\nst(\pi)\,.$$
\end{corollary}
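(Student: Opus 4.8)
The plan is to derive both identities directly by substituting the previously established formulas for $\tvd(\pi)$ into the two places where it appears: Theorem~\ref{thm:length_generating_formula} for $\ell(\pi)$, and Observation~\ref{obs:depth-index-restatement} for $\dindex(\pi)$. No new combinatorics is needed; the work is purely a bookkeeping of which statistics get eliminated in favor of $\cro(\pi)$ and $\nst(\pi)$.

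For the length function, I would start from Theorem~\ref{thm:length_generating_formula}(1), which says $\ell(\pi)=\sum_{\alpha\in\arcs(\pi)}\aspan(\alpha)-\cnumber(\pi)$. I would note that $\sum_{\alpha}\aspan(\alpha)=\tvd(\pi)$ by the definition of total vertex depth, and that $\cnumber(\pi)=\cro(\pi)$ since the crossing number of a perfect matching counts exactly the crossing pairs of arcs in the (ordinary) arc diagram. Substituting the preceding lemma's identity $\tvd(\pi)=2(\cro(\pi)+\nst(\pi))$ then gives $\ell(\pi)=2\cro(\pi)+2\nst(\pi)-\cro(\pi)=\cro(\pi)+2\nst(\pi)$.

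For the depth index, I would begin with the last line of Observation~\ref{obs:depth-index-restatement}, namely $\dindex(\pi)=n^2+2\binom{n}{2}-\tvd(\pi)-\cro(\pi)-\al(\pi)$, and insert $\tvd(\pi)=2\binom{n}{2}-2\al(\pi)$ from the preceding lemma. This collapses the $\binom{n}{2}$ terms and yields $\dindex(\pi)=n^2+\al(\pi)-\cro(\pi)$. Finally I would rewrite $\al(\pi)$ using Observation~\ref{obs:nst_cr_al_sum}, i.e. $\al(\pi)=\binom{n}{2}-\cro(\pi)-\nst(\pi)$, to arrive at $\dindex(\pi)=n^2+\binom{n}{2}-2\cro(\pi)-\nst(\pi)$, as claimed.

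I do not anticipate a genuine obstacle here: every ingredient is already proved in the excerpt, and the only risk is an arithmetic slip in tracking the constants (in particular keeping $n^2$, $\binom{n}{2}$, and $\binom{2n}{2}$ straight). If anything deserves a sentence of care it is the identification $\cnumber(\pi)=\cro(\pi)$, which should be spelled out so the reader sees that the "crossing number" of Theorem~\ref{thm:length_generating_formula} and the $\cro$ statistic of the crossing/nesting/alignment trichotomy are literally the same count.
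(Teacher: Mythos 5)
Your proposal is correct and follows essentially the same route as the paper: both arguments simply eliminate $\tvd(\pi)$ from the formulas $\ell(\pi)=\tvd(\pi)-\cro(\pi)$ and $\dindex(\pi)=n^2+\binom{n}{2}-\tvd(\pi)+\nst(\pi)$ using the preceding lemma (your detour through $\al(\pi)$ and Observation~\ref{obs:nst_cr_al_sum} is a trivially equivalent rearrangement, and your remark that $\cnumber(\pi)=\cro(\pi)$ deserves a word is well taken). Incidentally, the paper's one-line proof says to substitute $\tvd(\pi)$ with ``$2\nst(\pi)+\cro(\pi)$,'' which is evidently a typo for $2\nst(\pi)+2\cro(\pi)$; your computation uses the correct value.
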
 
\begin{proof} By Lemma~\ref{obs:depth_identity}, Definition \ref{definition:tvd} and Theorem~\ref{thm:length_generating_formula}, we have $\ell(\pi) = \tvd(\pi)-\cro(\pi)$. By Definitions \ref{definition:depth_index} and  \ref{definition:tvd} we have $\dindex(\pi)=n^2 + \binom{n}{2} - \tvd(\pi) + \nst(\pi)$.
Replace $\tvd(\pi)$ by $2\nst(\pi)+2\cro(\pi)$ and the result follows.
\end{proof}

We are ready to prove the main theorem.
In order to do it, we rely on the following facts.
\begin{theorem}\cite[Theorem~1.2]{CrossingNestingDistribution}\label{thm:involution_existence}
There exists an involution $\phi:PM_{2n}\to PM_{2n}$ that preserves the number of alignments and exchanges the number of crossings and nestings. In other words, for each $\pi\in PM_{2n}$ we have 
$$\al(\phi(\pi)) = \al(\pi), \cro(\phi(\pi)) = \nst(\pi), \nst(\phi(\pi)) = \cro(\pi).$$
\end{theorem}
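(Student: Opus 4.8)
The plan is to construct the involution $\phi$ explicitly through a left-to-right scan of the arc diagram. First I would encode each perfect matching $\pi\in PM_{2n}$ by a pair consisting of its \emph{opener/closer word} $w(\pi)\in\{\mathrm{O},\mathrm{C}\}^{2n}$ — recording, position by position, whether each vertex is the left or right endpoint of its arc — together with a sequence of local choices. Scanning the positions $1,2,\dots,2n$ and maintaining the list of currently open arcs, each opener adds a new open arc, and at each closer $v$ one must decide which of the $m_v$ currently open arcs to close; recording that choice as the rank $i_v\in\{1,\dots,m_v\}$ of the chosen arc by left endpoint yields a bijection between $PM_{2n}$ and such pairs. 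The crucial preliminary observation is that $m_v$ depends only on $w(\pi)$ — it equals the number of openers minus the number of closers among the positions $1,\dots,v-1$ — and is therefore unchanged by any re-pairing that fixes the opener/closer word.

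Next I would show that the statistics decompose additively over the closers. At the closer $v$ that closes the arc $e=(p,v)$, each of the other open arcs closes strictly later, and among them exactly $i_v-1$ have left endpoint smaller than $p$ (each producing a nesting with $e$ as the inner arc) while $m_v-i_v$ have larger left endpoint (each producing a crossing with $e$ as the initial edge). Since every crossing is registered exactly once — at the closing of its initial, hence earlier-closing, edge — and every nesting exactly once — at the closing of its inner, hence earlier-closing, arc — summation gives $\cro(\pi)=\sum_v(m_v-i_v)$ and $\nst(\pi)=\sum_v(i_v-1)$. The remaining pairs, namely those never simultaneously open, are precisely the alignments, so $\al(\pi)=\binom{n}{2}-\cro(\pi)-\nst(\pi)$ by Observation~\ref{obs:nst_cr_al_sum}.

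With this encoding the involution writes itself: define $\phi(\pi)$ to have the same opener/closer word $w(\pi)$ and, at each closer $v$, to close the open arc of rank $m_v+1-i_v$ instead of $i_v$. Because $w(\pi)$ is preserved, the values $m_v$ are identical for $\pi$ and $\phi(\pi)$, so the reconstruction is legitimate and the rank reversal $i_v\mapsto m_v+1-i_v$ is an involution at each closer, making $\phi$ an involution on $PM_{2n}$. At each closer the reversal interchanges the counts $m_v-i_v$ and $i_v-1$, whence $\cro(\phi(\pi))=\nst(\pi)$ and $\nst(\phi(\pi))=\cro(\pi)$; the alignment count, being $\binom{n}{2}$ minus their sum, is preserved.

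The main obstacle is the bookkeeping of the middle step: one must verify carefully that each unordered pair of arcs is accounted for exactly once and with the correct type — in particular that a nested pair is charged at the closing of its inner arc and a crossing at the closing of its initial edge, and that aligned pairs are exactly those never jointly open. Once this ``counted exactly once'' principle is nailed down, the invariance of $w(\pi)$ and of the $m_v$ under re-pairing makes both the involutivity of $\phi$ and the exchange of statistics essentially automatic.
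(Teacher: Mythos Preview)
The paper does not prove this statement; it is quoted verbatim as Theorem~1.2 of Kasraoui--Zeng \cite{CrossingNestingDistribution} and used as a black box in the proof of Theorem~\ref{thm:main}. Your construction is correct and is in fact the involution of that reference: encoding a matching by its opener/closer word together with, at each closer, the rank $i_v\in\{1,\dots,m_v\}$ (by left endpoint) of the arc being closed, and then replacing $i_v$ by $m_v+1-i_v$, is precisely the Kasraoui--Zeng map. Your decomposition $\cro(\pi)=\sum_v(m_v-i_v)$ and $\nst(\pi)=\sum_v(i_v-1)$, with each pair of arcs charged at the earlier of its two closers, is also correct, and the observation that $m_v$ depends only on the opener/closer word is exactly what makes the rank reversal an honest involution. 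So there is nothing to compare against the paper's own argument --- you have supplied the proof the paper merely cites.
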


\begin{observation}\label{fact:double_factorial_palindrom}
The polynomial $[2n-1]_q!!$ has degree $n^2-n$ and is \em{palindromic}. Namely, the coefficients of $q^r$ and $q^{n^2 - n - r}$ in $[2n-1]_q!!$ are equal for each $0\leq r \leq n^2 - n$.
\end{observation}

\begin{corollary}\label{cor:bruhat_order_inversion}
There exists a bijection  $\psi:PM_{2n}\to PM_{2n}$ such that 
$$\ell(\psi(\pi))=n^2-n-\ell(\pi)$$ 
for every $\pi\in PM_{2n}$.
\end{corollary}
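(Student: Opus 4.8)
The plan is to deduce the existence of $\psi$ purely from the enumerative symmetry of $\ell$, with no further combinatorics on matchings. The only input needed is that the generating polynomial $\sum_{\pi\in PM_{2n}}q^{\ell(\pi)}$ equals $[2n-1]_q!!$ (Theorem~\ref{thm:length_generating_formula}(2)), which by Fact~\ref{fact:double_factorial_palindrom} is palindromic of degree $n^2-n$. First I would restate palindromicity in terms of the fibres of $\ell$: writing $f_r:=\#\{\pi\in PM_{2n}:\ell(\pi)=r\}$ for the coefficient of $q^r$ in $[2n-1]_q!!$, palindromicity says precisely that $f_r=f_{n^2-n-r}$ for every integer $r$ (both sides being $0$ when $r\notin\{0,1,\dots,n^2-n\}$).

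Next I would assemble $\psi$ fibrewise. For each $r$ with $0\le r\le n^2-n$ fix an arbitrary bijection $\psi_r\colon \ell^{-1}(r)\to \ell^{-1}(n^2-n-r)$, which exists since the two finite sets have equal cardinality $f_r=f_{n^2-n-r}$. Because the sets $\ell^{-1}(0),\dots,\ell^{-1}(n^2-n)$ decompose $PM_{2n}$ (recall $0\le \ell(\pi)\le n^2-n$ for all $\pi$, as $[2n-1]_q!!$ has degree $n^2-n$), the maps $\psi_r$ glue to a single map $\psi\colon PM_{2n}\to PM_{2n}$, and $\ell(\psi(\pi))=n^2-n-\ell(\pi)$ holds by construction. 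It remains to see $\psi$ is a bijection: it is injective because it sends the fibre $\ell^{-1}(r)$ into $\ell^{-1}(n^2-n-r)$ and distinct $r$ give distinct $n^2-n-r$, so no two fibres can collide; it is surjective because any $\sigma\in PM_{2n}$ with $s:=\ell(\sigma)$ lies in the image of the bijection $\psi_{n^2-n-s}$. If one wants $\psi$ to be an involution, choose the $\psi_r$ only for $r<(n^2-n)/2$, set $\psi_{n^2-n-r}:=\psi_r^{-1}$, and --- since $n^2-n=n(n-1)$ is even --- let $\psi_{(n^2-n)/2}$ be the identity on its fibre; but the statement asks only for a bijection.

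I do not expect any real obstacle here: this is the standard fact that a statistic with palindromic generating function admits a rank-reversing bijection, and Fact~\ref{fact:double_factorial_palindrom} supplies exactly the palindromicity needed. The one caveat worth noting is that the argument is non-constructive. A concrete $\psi$ would be preferable, and Corollary~\ref{cor:statistic_formulas} together with Observation~\ref{obs:nst_cr_al_sum} shows $n^2-n-\ell(\pi)=\cro(\pi)+2\al(\pi)$, so an explicit $\psi$ would follow from a bijection on $PM_{2n}$ fixing $\cro$ and exchanging $\nst$ with $\al$; since no such map is needed downstream (the involution $\phi$ of Theorem~\ref{thm:involution_existence} already does the job in the proof of Theorem~\ref{thm:main}), I would not attempt to construct one here.
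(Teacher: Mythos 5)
Your proposal is correct and is essentially the paper's own argument: the paper likewise deduces from Theorem~\ref{thm:length_generating_formula} and Fact~\ref{fact:double_factorial_palindrom} that the fibres $\ell^{-1}(r)$ and $\ell^{-1}(n^2-n-r)$ have equal cardinality and concludes the existence of $\psi$; you have merely spelled out the fibrewise gluing explicitly. The extra remarks (the involution variant and the identity $n^2-n-\ell(\pi)=\cro(\pi)+2\al(\pi)$) are accurate but not needed.
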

\begin{proof}
By Theorem~\ref{thm:length_generating_formula}, the generating polynomial of the length function on $PM_{2n}$ is $[2n-1]_q!!$. Together with Observation~\ref{fact:double_factorial_palindrom}, it implies that for every $0 \leq r \leq n^2 - n$, the number of elements of $PM_{2n}$ with length $r$ is equal to the number of elements with length $n^2 - n - r$, as desired.
\end{proof}

We proceed to prove the Theorem~\ref{thm:main}.
\begin{proof}[Proof of Theorem~\ref{thm:main}]
By Corollary~\ref{cor:statistic_formulas} and Theorem~\ref{thm:involution_existence} we have:
$$\ell(\phi(\pi)) = 2\nst(\phi(\pi))+\cro(\phi(\pi))=2\cro(\pi)+\nst(\pi).$$
By applying Corollary~\ref{cor:statistic_formulas} again we obtain: 
$$\dindex(\pi) + \ell(\phi(\pi))=n^2+\binom{n}{2}.$$
Using $\psi$ from Corollary~\ref{cor:bruhat_order_inversion} and the relation
$\ell(\phi(\pi))=n^2 - n - \ell(\pi)$ we obtain:
$$\dindex(\pi) + n^2 - n - \ell(\psi(\phi(\pi))) = n^2 + \binom{n}{2}.$$
Finally, by rearranging the equation, we obtain 
$t(\pi)=\binom{n+1}{2}+\ell(\psi(\phi(\pi)))$. Since, $\psi\circ\phi$ is a bijection, the statistics $\dindex$ and $\binom{n+1}{2}+\ell$ are equi-distributed, and the Theorem~\ref{thm:main} follows.
\end{proof}

By using the identity $\inumber(\pi)+\dindex(\pi)=\binom{2n}{2}$ for every $\pi\in\Pi_{2n}$ we obtain our main result, i.e., a formula for the generating polynomial of the intertwining number.
We rely on the following observation.
\begin{observation}\label{fact:palindromic_polynomial_fraq_q}
    If $p(q)$ is a palindromic polynomial in $q$ of degree $m$, then
    $$q^m p\left(\frac{1}{q}\right)=p(q)\,\,.$$
\end{observation}

\begin{theorem}\label{mainitnnn}
The generating polynomial of the intertwining number $I_{PM_{2n}}(q)$ is
$$I_{PM_{2n} }(q) = \sum_{\pi\in PM_{2n} }q^{\inumber(\pi) }=q^{\binom{n}{2}}[2n-1]_q!!\,\,.
$$
\end{theorem}

\begin{proof}
By Theorem~\ref{thm:dindex_inumber_sum} we have
$$\sum_{\pi\in PM_{2n} }q^{\inumber(\pi)}
=\sum_{\pi\in PM_{2n} } q^{\binom{2n}{2} - \dindex(\pi)}
=q^{\binom{2n}{2}} \sum_{\pi\in PM_{2n} } \frac{1}{q^{\dindex(\pi)} },
$$
By applying Theorem~\ref{thm:main}, we obtain
$$q^{\binom{2n}{2}} \sum_{\pi\in PM_{2n} } \frac{1}{q^{\dindex(\pi)} } =
q^{\binom{2n}{2}}\frac{1}{q^{\binom{n+1}{2} } }[2n-1]_{\frac{1}{q} }!!.
$$

By Observation~\ref{fact:palindromic_polynomial_fraq_q} and Observation~\ref{fact:double_factorial_palindrom}, 
$q^{n^2 - n}[2n-1]_{\frac{1}{q} }!!=[2n-1]_q!!$, and therefore 
\begin{align*}
    q^{\binom{2n}{2}}\frac{1}{q^{\binom{n+1}{2} } }[2n-1]_{\frac{1}{q} }!! &= 
    q^{\binom{2n}{2}}\frac{1}{q^{\binom{n+1}{2} }q^{n^2-n} }[2n-1]_q!! \\ &=
    q^{\binom{2n}{2} - \binom{n+1}{2} - n^2 + n}[2n-1]_q!!=q^{\binom{n}{2} }[2n-1]_q!!,
\end{align*}
as desired.
\end{proof}

{\bf Acknowledgements.} We are grateful to Yuval Roichman and Mahir Bilen Can for helpful discussions. We express a special gratitude to Ron Adin for many very valuable remarks. 

{\bf Conflict of interest statement.} On behalf of all authors, the corresponding author states that there is no conflict of interest.

{\bf Contribution of the co-authors.} Both co-authors contributed to all parts of the paper. 

\end{document}